\newtheorem{thm}{Theorem}[section]
\newtheorem{lem}[thm]{Lemma}
\newtheorem{prop}[thm]{Proposition}
\theoremstyle{definition}
\newtheorem{defn}[thm]{Definition}
\theoremstyle{remark}
\numberwithin{equation}{section}
\def\R{\mathbb R}
\def\j{\mathbf j}
\def\n{\mathbf n}
\title[Confidence nets based on finite reflection groups]{``Building'' exact confidence nets}
\author[Francis]{Andrew R. Francis}
\address{Centre for Research in Mathematics\\
Western Sydney University\\
Australia}
\email{a.francis@westernsydney.edu.au}
\thanks{ARF thanks the Australian Research Council for funding via FT100100898.}
\author[Stehlik]{Milan Stehlik}
\address{Institut f\"ur Angewandte Statistik\\
University of Linz\\
Austria, and\\
Departamento de Matem\'atica\\
Universidad T\'ecnica Federico Santa Mar\'{\i}a\\
Casilla 110-V\\
2360102 Valpara\'iso\\
Chile
}
\email{milan.stehlik@jku.at}
\author[Wynn]{Henry P. Wynn}
\address{Department of Statistics\\
London School of Economics\\
UK}
\email{h.wynn@lse.ac.uk}
\subjclass[2000]{Primary: 62G86}
\keywords{confidence interval, reflection group, typical value, subsample}
\begin{document}
\maketitle

\begin{abstract}
Confidence nets, that is, collections of confidence intervals that fill out the parameter
space and whose exact parameter coverage can be computed, are familiar in nonparametric statistics.
Here, the distributional assumptions are based on invariance under the action of a finite reflection
group. Exact confidence nets are exhibited for a single parameter, based on the root system
of the group. The main result is a formula for the generating function of the
coverage interval probabilities. The proof makes use of the theory of ``buildings" and
the Chevalley factorization theorem for the length distribution on Cayley graphs of finite reflection groups.

\end{abstract}

\section{Introduction}\label{sec:intro}
It is well known, and usually attributed to Wilks~\cite{wilks1948order}, that the order statistics from a random sample
provide nonparametric confidence intervals for percentiles from a distribution: every interval formed by the
order statistics covers a given percentile with a computable probability. For the median the probabilities
are of binomial form. We shall refer to the situation in which the set of coverage intervals cover the real line
and the coverage probability of each interval is computable as a {\em confidence net}.

An interesting example  is given by Hartigan \cite{hartigan1969using,hartigan1975necessary} for the median, given an independent sample from a distribution symmetric about the median. There, the net is based on all sub-sample means: for a sample $\{y_i\mid i \in \n = \{1,\ldots, n\}\}$ and $S \subset \n$, a  subsample mean is $\frac{1}{|S|} \sum_{i \in S} y_i$, in which each of the $2^n$ intervals has coverage probability $\frac{1}{2^n}$. 
Hartigan's typical value theorem~\cite{hartigan1969using} is the basis for random subsampling, namely a resampling plan  to construct confidence intervals for the centre of a symmetric distribution on  a real line.
Atkins and Sherman \cite{atkins1992sets}  derived a group-theoretic condition on a set of subsamples of a random sample from a continuous random variable symmetric about zero to be sufficient to provide typical values for zero. With the current interest in very large data sets, subsampling from complex data can be viewed
as a natural solution to the computational issues. 
While many methods have been devised to provide unbiased and efficient estimation of average quantiles, to our knowledge no such method exploits invariance under the action of a finite reflection group.
Knowledge of invariance provides an omnibus method for constructing covering nets, which cannot be obtained by inverting  selected nonparametric multivariate rank tests (see~\cite{jurevckova2012nonparametric}).  Hartigan's work 
has also had impact in the theory of the bootstrap and resampling (see Efron~\cite{efron1982jackknife}, Efron and Tibshirani~\cite{efron1986bootstrap}).

Another example is the set of intervals formed by pairwise means, $\{\frac{y_i+y_j}{2}\}$, sometimes called Walsh averages. These are the basis for one version of the Hodge-Lehmann estimator for a mean \cite{hodges1963estimates}, which is the empirical median of all pairwise means (including the single observation). There are strong connections to the Wilcoxon signed rank (sum) test where the same generating function as derived for the group of type $B_n$ in this paper is used in the computation of critical values~\cite{mitic1996critical,van1999symbolic}. Indeed, the current paper could be represented as a group theoretic generalisation of the generating function approach of these papers, or, given the duality between testing and confidence intervals, as a way to invert certain permutation tests (see for example Trichler~\cite{tritchler1984inverting}).

We first give an account of a general construction of nonparametric confidence interval nets and then specialise to the case of finite reflection groups, showing the relation to the root systems of the groups.  Finite reflection groups have been classified completely up to isomorphism, and via this classification are also known as finite Coxeter groups, which also  have a purely algebraic definition based on their presentations.  We will not elaborate on this classification here but refer the reader to~\cite{Hum90} or~\cite{abramenko2008buildings}.  Subsections~\ref{sec:typeB} and~\ref{sec:typeD} cover in some detail the case of the finite Coxeter groups of type $B_n$  (the hyperoctahedral groups) and type $D_n$.  It turns out that in the case of type $B_n$ the interval boundaries are the pairwise means, mentioned above, together with the single observations. In the case of type $D_n$ they are the pairwise means, but excluding the single observations. The generating functions turn out to be familiar from the theory of partitions in number theory.

In Section~\ref{sec:gen.case} the main result of the paper is given, namely a  generating function for the interval probabilities for a general finite Coxeter group (with one exception). 
{Specifically, we show (Theorem~\ref{thm:main}) that the frequency distribution for the intervals of the confidence net based on a (almost any) finite irreducible Coxeter group is given by the generating function
$$G(q) =  \frac{\prod_{j=1}^m (1-q^{d_j})}{\prod_{i=1}^n(1-q^j)},$$
where $d_1, \ldots , d_m$ are the basic invariant degrees of  the group.
As an example, when the group is the Coxeter group of type $B_2$, the generating function is $G(q)=1 + 2q + 2q^2 +2q^3+q^4$.  This indicates that over the five intervals, the relative (coverage) probability of the parameter $\theta$ being in one of the middle three intervals is twice that of it being in one of the extremal intervals.  Details of this example and others appear in Sections~\ref{sec:cones} and~\ref{sec:gen.case}.
}

The proof {of this result} is given in Section~\ref{sec:proof} and relies on showing that the probabilities are derived from the Coxeter length function for the quotient of the Coxeter group by the symmetric (permutation) group (the finite Coxeter group of type $A_n$). To translate the geometry of the confidence net {into group theory} requires the theory of buildings (given in Section~\ref{sec:chambers}) and specifically the mapping
of intervals into ``chambers'' and the full collection of intervals, nets, into  ``galleries'' formed by chambers.  Because of the strong {links with} group theory, we also put this paper forward as a contribution to the rapidly developing area of ``algebraic statistics", in which there has been renewed interest in permutation tests;  see for example Morton et al~\cite{morton2009convex}.

There is a long tradition of the study of ``statistics'' (also called indices) such as the length function, on groups. For example, Reiner~\cite{reiner1993signed} studied the extension of such statistics from the symmetric groups to type $B_n$. Adin and Roichman~\cite{adin2001flag} defined a new index called the flag major index whose length was equidistributed  in the type $B$ case. They used this to study group actions on polynomial rings. Geometric distance problems in genomic rearrangements can be reduced to Coxeter length problems~\cite{francis2013algebraic,egri2014group}. In statistics, Diaconis~\cite[Chapter 4C]{diaconis1988group}, makes the connection between length distributions and non-parametric tests.

Our general formula (Theorem~\ref{thm:main}) agrees with that for $B_n$ and $D_n$ already derived in Sections~\ref{sec:typeB} and~\ref{sec:typeD}, using a counting argument on the raw inequalities describing the cones of the groups. The generating functions for the exceptional groups $E_6$, $E_7$, $E_8$ and for the groups of type $A_n$ are given as examples after the main proof. The net in the case of $E_8$ has a remarkable $93$ cells. 
While these {$E_n$} cases can only be used for sample sizes $6,7$ and $8$ respectively, they are nonetheless of independent interest. 
The paper concludes with short sections on an example not in the group class, and some simple asymptotics.

\section{Confidence nets}\label{sec:confnets}
Let $Y$ be a random $n$-vector with probability density function $f(y, \theta)$, where $\theta$ is an unknown $k$-dimensional parameter. 
{For most of this paper we will study the case $k=1$, but begin in this Section with the general set-up.}
We assume that $Y$ can be transformed by a measurable transformation $T(y,\theta)$, typically $\theta$-dependent, to a random variable $Z$:
$$Z = T(Y, \theta),$$
which is  also $n$-dimensional and has a distribution  some of whose properties are known, independently of $\theta$.

Assume there exists a finite collection of sets $\{C_i,\;i=1, \ldots, m\}$, such that
\begin{enumerate}
\item $\bigcup C_i =  \R^n$,
\item The measure with respect to $Z$ of any intersection $C_i \cap C_j,\;  i \neq j$, is zero,
\item $\mbox{prob} \{Z \in C_i\} = \alpha_i,\; i=1, \ldots, m.$
\item The $\alpha_i$ are positive, do not depend on $\theta$, and $\sum_{i=1}^m \alpha_i =1$.
\end{enumerate}
Define, for fixed $y$
$$S_i(y) = \{ \theta : Z \in C_i\}.$$
Thus, $S_i(y)$  is the inverse of the function $T(y,\theta)$ for  fixed $y$ and
$$\mbox{prob} \left\{S_i(y) \ni \theta\right\} = \alpha_i.$$
We should note that typically $k$ (the dimension of $\theta$) is very much smaller than $n$.

A confidence net is based on the following coarsening in the description of the coverage sets $S_i$ using geometric considerations.  Suppose that  there are $N$ random sets $\{U_j(Y),\, j = 0, \ldots, N-1\}$ in $\theta$-space whose intersections cover $\theta$ with zero probability, $\bigcup_{j=0}^{N-1} U_j =  \R^k$, and such that for any $i = 1, \ldots, m$, there is a mapping $j = u(i)$ such that
$$S_i(y) = U_j(y),$$
and moreover that every $U_j$ can be obtained in this way. The mapping $u(\cdot)$ is typically a many-to-one mapping, and given any
$j$ we can define {the} inverse:
$$u^{-1}(j) = \{i: j = u(i)\}.$$
This implies that the $U_j(Y)$ are themselves (random) coverage sets with coverage probabilities
\begin{align*}
p_j & = \mbox{prob}  \{U_j  (Y) \ni \theta\} \\
    & = \sum_{i \in u^{-1}(j)} \alpha_i
\end{align*}
for $j= 0, \ldots, N-1$. Note also that since $\sum_{j=0}^{N-1} \alpha_i = 1$, we have $\sum_{i=0}^{N-1} p_i =1$. We refer to the
set $\{U_j(Y)\}$ as an {\em exact confidence net}.  We summarise this in the following definition.

\begin{defn} For a parametric statistical model with random variable $Y$ (possibly multivariate), an \emph{(exact) confidence net} is a collection of data dependent sets $U_j(Y), j=0, \ldots, N-1$ whose union is the whole parameter space, and such that the probability
that $U_j(Y)$ covers the parameter $\theta$ is a known quantity $\alpha_j$, $j=0, \ldots, N-1$ and such that any intersection of the $U_j$ covers $\theta$ with probability zero.
\end{defn}


Here we take a classical statistical approach to coverage nets. Thus, the notion is that the user {\em declares} the sets $U_j$. 
{Theories} of inference based on collections of coverage sets can be thought of as part of a well developed theory of belief {functions} based on upper and lower probabilities and the theory of random sets based on Choquet capacities (see~\cite{dempster1967upper,wasserman1990bayes}). In terms of the former, a coverage net is essentially a theory of random sets in which the upper and lower probabilities coincide, and in which the Choquet capacity functional is additive over the $\sigma$-algebra of unions of sets. That is to say, for the sets $\{U_j\}$ we have for $i \neq j$,
$$\mbox{prob} \{ U_i \cup U_j \ni \theta \} = \mbox{prob} \{ U_i \ni \theta \} +  \mbox{prob} \{ U_j \ni \theta \},$$
and so on.

\section{Reflection groups and cones}\label{sec:cones}

Let $Y$ be an $n$-dimensional random vector and $\theta$ be a univariate parameter ($k=1$). Define
$$
Z(Y,\theta)=(Y_1-\theta, Y_2 - \theta, \ldots, Y_n-\theta)^T.
$$
Let $G$ be a finite reflection group acting on $\R^n$ and let $\{C_i, \; i=1, \ldots, m\}$, where $m=|G|$, be the collection
of cones in $\R^n$ that are the transformations under $G$ of the fundamental cone $C_1$. Our key condition, corresponding to condition (3) in Section~\ref{sec:confnets}, is that every such cone has the same probability content with respect to the distribution of $Z$:
$$
\mbox{prob} \left\{ Z \in C_i\right\}  = \frac{1}{m}, \; \; i= 1, \ldots, |G|.
$$

Each statement $\{y \in C_i\}$  yields a statement $\theta \in U_j$. To find $\{U_j\}$ we need to provide the mapping $i \mapsto j=u(i)$. The $U_j$ are intervals and it is enough to give their endpoints. Then, for each $j$ we can define the count
$N_j = | u^{-1}(j)|$, namely the number of cones giving $U_j(y)$. Then under our assumptions
$$p_j = \mbox{prob}\left\{U_j \ni \theta\right\} = \frac{N_j}{m}, \quad j=0, \ldots, N-1.$$

Fortunately, although the orders of the groups can be very large, the geometry of finite reflection groups can be understood in terms of their root systems, and
the number of roots is orders of magnitude smaller.  In all that follows the counts $N_j$ will have a factor of $n!$, the order of the symmetric group, and it is somewhat neater, therefore, to work with $n_j = \frac{N_j}{n!}$. The main objective of this paper is to find, for different groups,
the generating function for the $\{n_j\}$:
$$G(q) = \sum_{j=0}^{N-1} n_j q^j.$$

Every finite reflection group is defined by its {\em roots}. These are vectors $\{a_j\}$ that define the perpendiculars to the defining hyperplanes
 $$H_j = \{x: a_j^T x =0 \}$$
 forming the walls of the cones $C_i$. Roots are identified with half-spaces and therefore come in pairs: $\pm a_j$, which are important in the classification of these groups (for more details on root systems and the classification of finite reflection groups, see for instance~\cite{Hum90}).

Before we proceed to a general approach, in the next two subsections  we will use an elementary discussion of inequalities and a counting argument to derive the generating functions $G(q)$, in two cases.

\subsection{The hyperoctahedral groups: type $ B_n$}\label{sec:typeB}

The group of type  $B_n$ (we will refer to the group simply as ``$B_n$") operates on points $z \in \R^n$ by permutation and sign change of the coordinates. It has order $2^nn!$. Its fundamental cone
$C_1$ is (by convention) given by
$$
z_1 \geq z_2 \geq \cdots  \geq z_n \geq 0,
$$
and it  has fundamental roots given by each of the inequalities above. In standard notation the roots are
$$
\{e_1-e_2, e_2-e_3, \ldots, e_{n-1} - e_n, e_n\},
$$
where the $e_i$ are unit vectors. The fundamental roots are thus
$$
(1,-1,0, \ldots, 0)^T, (0,1,-1,0,\ldots,0)^T, \ldots, (0, \ldots, 0, 1)^T.
$$
It is important to repeat that all other roots come from transformation of these roots under the group.
Each cone $C_i$ is obtained by transformation of $C_1$ under a suitable group element. We can describe
the cones compactly by inequalities:
$$
\pm z_{\pi(1)} \geq \pm z_{\pi(2)} \geq \cdots \pm z_{\pi(n)} \geq 0, \label{inequals}
$$
where $\pi=(\pi(1),\dots,\pi(n))$ ranges over all $n!$ permutations of $\{1, \ldots, n\}$.

Substituting $z_i = y_i - \theta$ for $i=1, \ldots, n$,  we have
$$
\pm (y_{\pi(1)} - \theta) \geq \pm (y_{\pi(2)} - \theta) \geq \cdots \geq \pm (y_{\pi(n)} - \theta) \geq 0. \label{inequals}
$$
Thus, every cone $C_i$ is defined by a set of inequalities for $\theta$, each of which
yields a $U_j$ interval for $\theta$.

Consider the case $B_3$. First fix the order $z_1, z_2,z_3 $. 
The set of inequalities which describes the cones for this order is
\begin{align*}
 +(y_1-\theta) \geq +(y_2-\theta) \geq +(y_3-\theta) & \geq 0 \\
 +(y_1-\theta) \geq +(y_2-\theta) \geq -(y_3-\theta) & \geq 0 \\
 +(y_1-\theta) \geq -(y_2-\theta) \geq +(y_3-\theta) & \geq 0 \\
 -(y_1-\theta) \geq +(y_2-\theta) \geq +(y_3-\theta) & \geq 0 \\
 +(y_1-\theta) \geq -(y_2-\theta) \geq -(y_3-\theta) & \geq 0 \\
 -(y_1-\theta) \geq +(y_2-\theta) \geq -(y_3-\theta) & \geq 0 \\
 -(y_1-\theta) \geq -(y_2-\theta) \geq {+(y_3-\theta)} & \geq 0 \\
 -(y_1-\theta) \geq -(y_2-\theta) \geq -(y_3-\theta) & \geq 0. \\
\end{align*}
We are interested in the index of the interval {that covers} $\theta$, that is, its \emph{position} among the subset means, {because this determines the interval $U_j$ for $\theta$}.  There are $\binom{3}{ 2} + 3=6$ subset means, namely the pairwise means $\frac{y_i+y_j}{2}$ ($i\neq j$) and the individual $y_i$'s.  For instance, the first row of inequalities above yields $\theta\leq y_1,y_2,y_3$, and consequently $\theta$ is also less than each of the pairwise means, placing it in the zero-th position in interval $U_0$.  A less trivial example is the fourth row of inequalities, whose manipulations  yield 
\[ y_1,\frac{y_1+y_2}{2},\frac{y_1+y_3}{2}\le \theta \le y_2,y_3,\frac{y_2+y_3}{2},
\]
{so that $\theta$ is covered by $U_3$.  Note that other inequalities, such as $y_3\le y_2$, follow from the same row of the list of inequalities above, but these do not affect the coverage interval for $\theta$.}
The fifth row of inequalities also places $\theta$ in third position.  Repeating this for each row, we obtain  $\theta$, respectively, in intervals: $U_0,U_1,U_2, U_3,U_3,U_4,U_5,U_6$ (noting the double representation of the middle interval). We obtain the same distribution
for all permutations $\pi$. 
This gives the $n_i$ count
as $1,1,1,2,1,1,1,$ with generating function
$$G(q) = 1+q+q^2+2q^3+q^4+q^5+q^6.$$

For $B_4$ a similar calculation yields $4! 2^4 = 384$ sets of inequalities, in blocks of $2^4 = 16$, one block of inequalities for each permutation, as for $B_3$. There are 11 intervals formed by the 10 values $\{ y_i, \frac{y_i + y_j}{2}, i,j=1,\ldots, 4,\; i \neq j\}$. The $n_i$ count is $1,1,1,2,2,2,2,2,1,1,1$ {(summing to 16)} with generating function
$$G(q) = 1+ q + q^2 + 2q^3 +2q^4+2q^5+2q^6 + 2q^7 + q^8 + q^9 + q^{10}.$$

From these examples we see how to evaluate the vector $(n_0,\dots,n_{N-1})$ for $B_n$. {For} any line of inequalities (cone)
the $k$-th interval  covers $\theta$ if and only if there are exactly $k$ elements from the set of possible boundaries $\{y_i, \frac{y_i+y_j}{2},\; i < j\}$ less than or equal to $\theta$. Thus, for our initial permutation and a particular cone:
$$
\begin{array}{rcl}
n_k & = & \left| \{i: y_i \leq \theta \}\right|\ +\ |\{ (i,j): i \leq j; \; \frac{y_i+y_j}{2} \leq \theta\}|.
\end{array}
$$
Now consider which sign combinations {on the $(y_i-\theta)$} lead to a contribution to $n_k$. The possibilities
are:
\begin{enumerate}
\item  $y_i \leq \theta$: a single $-$ at position $i$.
\item $\frac{y_i+y_j}{2} \leq \theta$: a pair $ -, + $ in positions $i< j$, respectively.
\item $\frac{y_i+y_j}{2} \leq \theta$: a pair $ -, - $ in positions $i< j$, respectively.
\end{enumerate}
Define indicator functions which capture the sign combination:  $x_i=1,0$ for $-,+$ in the $i$-th position respectively. We can then
set up a counting function to capture $n_i$:
\begin{align*}
\psi(x_1,\ldots, x_n) & = \sum_{i=1}^n x_i + \sum_{i,j, i < j}^n x_i(1-x_j) + \sum_{i<j}^n x_i x_j \\
		& = \sum_{i=1}^n i x_i.
\end{align*}

Now, considering the $x_i$ as independent Bernoulli
random variables, each  $\{i x_i\}$ is independent with support $\{0, i\}$ and probability generating function
$\frac{1}{2}(1+q^i)$, by convolution, and multiplying by $2^n$, we see that
$$G_n(q) = (1+q)(1+q^2) \cdots (1+q^n),$$
which we confirm in the cases $n=3,n=4$, above.

\subsection{The groups of type $D_n$}\label{sec:typeD}
The group of type $D_n$ (which we again will call simply the group $D_n$) is the group of permutations with an even number of sign changes, and has order $2^{n-1} n! $.
It has fundamental cone
$$z_1 \geq z_2 \geq \cdots  \geq z_n, \; z_{n-1}+z_n \geq 0.$$
The roots are $$\{e_1-e_2, e_2-e_3, \ldots, e_{n-1} - e_n, e_{n-1}+e_{n}\},$$
giving
$$(1,-1,0,\ldots,0)^T, (0,1,-1,0,\ldots,0)^T, \ldots, (0, \ldots, 0, 1,-1)^T, (0,\ldots, 0, 1,1)^T.$$
In the case $n=4$ the inequalities are
$$z_1\ge z_2\ge z_3\ge z_4; \; z_3\ge -z_4,$$
giving
$$y_1-\theta \ge y_2-\theta \ge y_3-\theta\ge y_4-\theta;\;  y_3-\theta\ge -(y_4-\theta).$$
 Now $D_4$ allows permutations of the coordinates as well as {\em even} numbers of sign changes.  Therefore the possible signs are as follows (the vertical and horizontal lines will be explained shortly):
\begin{center}
$$
\begin{array}{ccccc}
+&+&+& \vline &+\\
-&-&+&\vline& +\\
-&+&-&\vline&+\\
+&-&-&\vline&+\\
\hline
-&+&+&\vline&-\\
+&-&+&\vline&-\\
+&+&-&\vline&-\\
-&-&-&\vline&-
\end{array}
$$
\end{center}
The second line, for example,  gives
$$-(y_1-\theta)\ge -(y_2-\theta)\ge y_3-\theta\ge y_4-\theta;\quad y_3-\theta\ge -(y_4-\theta),$$
from which we deduce
\begin{eqnarray*}
\theta & \geq &\frac{y_1+y_3}{2},\frac{y_1+y_4}{2},\frac{y_2+y_3}{2},\frac{y_2+y_4}{2},\frac{y_1+y_2}{2}; \\
\theta & \leq & \frac{y_3+y_4}{2}.
\end{eqnarray*}
The inequality $\theta \geq \frac{y_1+y_2}{2}$ is found by first noting that $\theta \geq y_2 \geq y_1$.
It is tempting to include the singletons $y_i$ in the set of boundary points, but not all $y_j$ can be determined in this way
which means that intervals using the $y_i$ are not fully computable.

Following the last remark, we determine coverage of $\theta $ given by all pair means $\frac{y_i+y_j}{2},\; i<j$.

We shall need to account for the following possibilities using slightly more complicated rules than for $B_n$:
\begin{enumerate}
\item  $y_i \leq \theta$: a single $-$ at position $i$, for $i=1,\;\dots,n-1$. This is used to help place the pair-means.
\item  $\frac{y_i+y_n}{2} \leq \theta$: $-$ in position $i$ and  $-$ in position $n$ for $i=1,\ldots, n-1$,
\item  $\frac{y_i+y_j}{2} \leq \theta$ for $1\leq i < j \leq n$: a pair $ -, +$ in positions $i,j$ respectively,
\item  $\frac{y_i+y_j}{2} \leq \theta$ for $1\leq i < j \leq n-1$: a pair $ -, -$ in positions $i,j$ respectively. This follows
by noting that that $y_i, y_j \leq \theta$, as in rule (1), above.
\end{enumerate}
We can split rule (3), above, into two cases: when the $+$ is in position $1,\ldots, n-1$; and when the $+$ is in position $n$ (hence the vertical line in the preceding figure). The latter can be combined with rule 2 to give rule 1. Again we take the indicator with $x_i=1$ for $-$ and $x_i=0$ for $+$ and our counter is
\begin{align*}
\psi(x) & =  \sum_{i=1}^{n-1}x_i + \sum_{i<j}^{n-1} x_i(1-x_j) + \sum_{i<j}^{n-1}x_i x_j\\
  & =   \sum_{i=1}^{n-1}i x^i.
\end{align*}
Thus, the generating function for the $n_i$, again using a convolution argument, is
$$G_{n-1}(q) = (1+q^2)(1+q^3)\cdots (1+q^{n-1}).$$

In the following section we re-derive these generating functions in a general framework, using the theory of buildings.

\section{The main result}\label{sec:gen.case}
As mentioned in the introduction we will use some theory developed around the concept of indices (sometimes call ``statistics") attached to
an element $g$ of a group $G$. MacMahon~\cite{macmahon1915combinatorial} discussed, for the symmetric group, {\em descent, excedance, length} and the {\em major index}. Authors are often interested in the frequency of the distinct values of an index as $g$ ranges over the whole group, and there are strong combinatorial results, going back to MacMahon, showing that one index has the same distribution as another, even though the actual indices (as mappings) are different. This work is relevant for us because (i)  we have a special index which is the value $j$ of our interval  $U_j$ of the net construction; (ii) generating functions play an important role; and (iii) the study of such indices is being extended to finite reflection groups such as $B_n$ and $D_n$.

A starting point for the construction of these indices is the Cayley graph of a group. If  $S$ is the set of generators of our group,
then the Cayley graph is a graph $(E,V)_G$ where each vertex $v_g\in V$ is labelled by a group element $g \in G$ and each edge $e_{g, h}$ by a single right multiplication by a generator $s\in S$ : $h = g s$; only generators may be used. The length, $l(g)$ of a group element $g \in G$ is the length of the minimal path on the graph from the identity $e$ to $g$, when each edge counts unity:
$$ l(g) = \min\{k \geq 0 : g = s_{i_1} s_{i_2}  \cdots s_{i_k},\text{ for } s_{i_1}\in S\}.$$

The Cayley graph for the group $B_2$ has two generators which we may take (on the $(z_1,z_2)$ plane) as  (i) $s_1$ the reflection in the line $z_1=z_2$ and (ii) $s_2$ the reflection in the $z_1=0$ axis. The Cayley graph and corresponding lengths are given in Figure~\ref{fig:cayley.B2}.

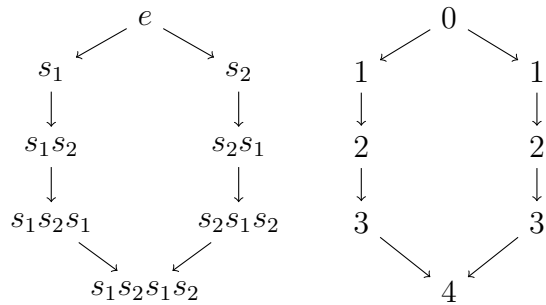
\begin{figure}[ht]
\begin{tikzpicture}
\draw (0,0) node (e) {$e$};
\draw node[below left of=e,left=2mm] (s1) {$s_1$}; %
\draw node[below right of=e,right=2mm] (s2) {$s_2$}; %
\draw node[below of=s1] (s12) {$s_1s_2$};
\draw node[below of=s2] (s21) {$s_2s_1$};
\draw node[below of=s12] (s121) {$s_1s_2s_1$};
\draw node[below of=s21] (s212) {$s_2s_1s_2$};
\draw node[below of=e,below=23.5mm] (s1212) {$s_1s_2s_1s_2$};
\draw[->](e)--(s1);
\draw[->](e)--(s2);
\draw[->](s1)--(s12);
\draw[->](s2)--(s21);
\draw[->](s12)--(s121);
\draw[->](s21)--(s212);
\draw[->](s121)--(s1212);
\draw[->](s212)--(s1212);

\draw (4,0) node (0) {$0$};
\draw node[below left of=0,left=2mm] (1) {$1$};
\draw node[below right of=0,right=2mm] (1') {$1$};
\draw node[below of=1] (2) {$2$};
\draw node[below of=1'] (2') {$2$};
\draw node[below of=2] (3) {$3$};
\draw node[below of=2'] (3') {$3$};

\draw node[below of=0,below=23.5mm] (4) {$4$};
\draw[->](0)--(1);
\draw[->](0)--(1');
\draw[->](1)--(2);
\draw[->](1')--(2');
\draw[->](2)--(3);
\draw[->](2')--(3');
\draw[->](3)--(4);
\draw[->](3')--(4);
\end{tikzpicture}
\caption{The Cayley graph for $B_2$ with its elements' lengths. }\label{fig:cayley.B2}
\end{figure}

The length frequency distribution is $\{f_0, f_1,\dots, f_m\}$ where $f_j = \#\{g : l(g) = j,\, g \in G\}$ and $m$ is the diameter of the group. The generating function for the length frequencies is
$$G(q) = \sum_{j=0}^m f_jq^j.$$
We can compute $G(q)$ using the Chevalley factorization theorem (see for instance~\cite[Section 3.15]{Hum90}):
\begin{thm}\label{thm:ourgroup}
Let $W$ be an irreducible Coxeter group. Then the length generating function is
$$G_W(q) = \prod_{j=1}^m \frac{1-q^{d_j}}{1-q},$$
where $d_1, \ldots , d_m$ are the basic invariant degrees of  the group.
\end{thm}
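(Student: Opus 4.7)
The plan is to prove this via the Chevalley--Shephard--Todd theory of the coinvariant algebra. Let $V$ be the real $m$-dimensional reflection representation of $W$ and write $A = \R[V] = \R[x_1,\ldots,x_m]$ for the polynomial algebra on $V$, graded by total degree; $W$ acts on $A$ by graded algebra automorphisms.

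The first step is to invoke the Chevalley--Shephard--Todd theorem: because $W$ is generated by reflections, the invariant subring $A^W$ is itself a polynomial ring on $m$ algebraically independent homogeneous generators of degrees $d_1,\ldots,d_m$, and $A$ is a free $A^W$-module of rank $|W|$. Letting $A_W := A/(A^W_+)A$ denote the coinvariant algebra, this freeness yields the Hilbert-series factorization
$$\operatorname{Hilb}(A,q) = \operatorname{Hilb}(A^W,q)\cdot \operatorname{Hilb}(A_W,q).$$
Substituting the elementary identities $\operatorname{Hilb}(A,q) = (1-q)^{-m}$ and $\operatorname{Hilb}(A^W,q) = \prod_j (1-q^{d_j})^{-1}$ gives at once
$$\operatorname{Hilb}(A_W,q) = \prod_{j=1}^m \frac{1-q^{d_j}}{1-q}.$$

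The heart of the argument, and what I expect to be the main obstacle, is the second half: identifying this Hilbert series with the length generating function $G_W(q)$. What is needed is a homogeneous $\R$-basis $\{e_w : w\in W\}$ of $A_W$ with $\deg(e_w) = \ell(w)$. Such a basis can be produced using the Bernstein--Gelfand--Gelfand divided-difference operators $\partial_s$ attached to the simple reflections: each $\partial_s$ lowers degree by one and is well defined on reduced expressions, so taking a fixed top-degree element $\Delta \in A_W$ of degree $\ell(w_0)$ (with $w_0$ the longest element of $W$) the assignment $e_w := \partial_{w^{-1}w_0}(\Delta)$ produces a homogeneous element of degree exactly $\ell(w_0) - \ell(w^{-1}w_0) = \ell(w)$. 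Alternatively, one can use the more combinatorial Garsia--Stanton descent basis, which avoids any appeal to geometry. The nontrivial point in either construction is the linear independence of the resulting family, which is established by a triangularity argument with respect to an appropriate order (Bruhat or lexicographic, depending on the construction). Granted such a basis, one reads off
$$G_W(q) = \sum_{w\in W} q^{\ell(w)} = \operatorname{Hilb}(A_W,q) = \prod_{j=1}^m \frac{1-q^{d_j}}{1-q},$$
which is the desired formula. The assumption of irreducibility is not essential for the argument, but it fixes the normalisation of the degrees $d_1,\ldots,d_m$; for a reducible group both sides factor over the irreducible components, so the general case follows immediately.
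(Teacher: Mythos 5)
Your argument is correct in outline, but it takes a genuinely different route from the paper, which in fact offers no proof of this statement at all: Theorem~\ref{thm:ourgroup} is quoted there as the Chevalley factorization theorem with a pointer to \cite[Section 3.15]{Hum90}, where the standard derivation combines Solomon's inclusion--exclusion recursion over parabolic subgroups (the identity relating $\sum_w t^{\ell(w)}$ to the Poincar\'e polynomials of the $W_I$) with the invariant theory of $W$. Your route instead goes through the coinvariant algebra: the factorization $\mathrm{Hilb}(A,q)=\mathrm{Hilb}(A^W,q)\cdot\mathrm{Hilb}(A_W,q)$ is an immediate consequence of Chevalley--Shephard--Todd freeness and correctly yields $\mathrm{Hilb}(A_W,q)=\prod_j(1-q^{d_j})/(1-q)$, and you rightly identify the real content as producing a homogeneous basis of $A_W$ indexed by $W$ with $\deg e_w=\ell(w)$. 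The divided-difference construction does deliver this for every finite Coxeter group (not only Weyl groups), since $\partial_s f=(f-sf)/\alpha_s$ makes sense for any reflection and the nil-Coxeter relations give well-definedness of $\partial_w$ on reduced words; the triangularity argument for independence is in Bernstein--Gelfand--Gelfand and, for general $W$, in Hiller. What your approach buys is a canonical length-graded basis (Schubert classes) rather than just the numerical identity; what it costs is reliance on two substantial imported theorems where Humphreys' route needs only elementary invariant theory plus a combinatorial recursion. Two small cautions: the Garsia--Stanton descent monomials have degree equal to the \emph{major index}, not the length, so that alternative yields the generating function only via equidistribution (and its extension beyond type $A$ is not automatic); and your closing remark that irreducibility is inessential is correct and worth keeping, since the paper applies the formula componentwise anyway.
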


Table~\ref{tab:degrees}, taken from \cite[Section~3.7]{Hum90}, lists the degrees for the crystallographic Coxeter groups.  The polynomial $G_W(q)$ is known as the \emph{Poincar\'e polynomial} of $W$.

\begin{table}
$$
\begin{array}{l|l}
  \mbox{Type}      & d_1, d_2, \ldots \\ \hline
   A_n      & 2,3, \ldots, n+1 \\
   B_n, C_n & 2,4, \ldots , 2n-2, 2n \\
   D_n      & 2,4,6,\dots, 2n-2,n \\
   E_6      & 2,5,6,8,9,12 \\
   E_7      & 2,6,8,10,12,14,18 \\
   E_8      & 2,8,12,14,18,20,24,30 \\
   F_4      & 2,6,8,12 \\
   G_2      & 2,6
\end{array}
$$
\caption{Degrees for the crystallographic Coxeter groups}\label{tab:degrees}
\end{table}

With our running $B_2$ example,
\begin{align*}
G_W(q)  & = \frac{(1-q^2)(1-q^4)}{(1-q)^2}
      \ = 1 + 2q + 2q^2 +2q^3+q^4,
\end{align*}
giving the frequencies $(1,2,2,2,1)$, as expected from the graph in Figure~\ref{fig:cayley.B2}.

The length distribution for the symmetric group is
$$G_{S_n}(q)= \prod_{i=1}^n \frac{1-q^i}{1-q}.$$
In the main theorem, which follows, the formula is obtained by dividing the generating
function for the length distribution of our group given in Theorem~\ref{thm:ourgroup}, by that for the symmetric group.

\begin{thm}\label{thm:main}
The generating function for the frequency distribution for the intervals of the confidence net based on a finite irreducible Coxeter group $G$ of any type except $F_4$ is given by
$$G(q) =  \frac{\prod_{j=1}^m (1-q^{d_j})}{\prod_{i=1}^n(1-q^j)},$$
where $d_1, \ldots , d_m$ are the basic invariant degrees of  the group.
\end{thm}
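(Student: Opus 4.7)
The plan is to show that $G(q)$ equals the Poincar\'e series of the quotient $G/S_n$, where $S_n \subset G$ is the parabolic subgroup acting by permuting the coordinates $y_1,\ldots,y_n$, and then to read off the formula by dividing the Chevalley factorisation for $G_G(q)$ (Theorem~\ref{thm:ourgroup}) by the corresponding one for $G_{S_n}(q)$.

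First, I would show that each chamber of the Coxeter arrangement of $G$ (equivalently, each cone $C_i$ labelled by a group element $g\in G$) determines a unique interval $U_j$ containing $\theta$, and that two chambers give the same interval precisely when their labels lie in the same left coset of $S_n$. This is because the possible interval boundaries (pairwise means and related symmetric quantities seen in Sections~\ref{sec:typeB} and~\ref{sec:typeD}) are invariant under coordinate permutations, so the action of $S_n$ leaves the interval containing $\theta$ unchanged. This recovers the factor $n!=|S_n|$ observed in the normalisation $n_j = N_j/n!$.

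The main ingredient, and where the theory of buildings enters, is the identification of the interval index $j$ with a Coxeter length. Viewing the chambers as the top-dimensional faces of the Coxeter complex, adjacent chambers differ by a single wall crossing (equivalently, by right multiplication by a simple reflection), and a minimum gallery from the fundamental chamber to $gC_1$ realises a reduced expression of length $\ell(g)$. The walls corresponding to the simple reflections of $S_n$ are parallel to the $\theta$-direction and crossing one leaves the interval containing $\theta$ unchanged; walls in the other directions project onto the boundary points of the confidence net, and crossing such a wall advances the interval index $j$ by exactly one. Using the standard factorisation $\ell(g)=\ell(g^J)+\ell(g_J)$, with $g=g^J g_J$, $g_J\in S_n$ and $g^J$ the minimum-length coset representative, the number of ``interval-advancing'' crossings along a minimum gallery is exactly $\ell(g^J)$, so $j=\ell(g^J)$.

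Combining the two steps,
$$G(q)\;=\;\sum_{g^J}q^{\ell(g^J)}\;=\;\frac{G_G(q)}{G_{S_n}(q)},$$
where the sum is over minimum-length coset representatives. Applying Theorem~\ref{thm:ourgroup} to both $G$ and $S_n$ and cancelling the common powers of $(1-q)$ from numerator and denominator yields the theorem. The main obstacle is the geometric content of the length identification: one must identify those roots of $G$ projecting nontrivially to the $\theta$-axis, show that the remaining roots generate precisely the parabolic $S_n$, and verify that a minimum gallery can be chosen so that its ``non-$S_n$'' crossings advance the interval index monotonically rather than oscillating. The exclusion of $F_4$ presumably reflects that its root system has roots of two distinct lengths and lacks a parabolic $S_4$ whose orthogonal complement is the required one-dimensional $\theta$-line, so the projection picture underlying the identification $j = \ell(g^J)$ breaks down.
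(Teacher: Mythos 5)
Your overall architecture matches the paper's: identify the interval index of a chamber with the Coxeter length of the distinguished (minimal-length) representative of its $S_n$-coset, and then read off the answer as the quotient $G_G(q)/G_{S_n}(q)$ of Poincar\'e polynomials via the Chevalley factorisation and Lemma~\ref{lem:coxeter.facts}\eqref{fact:poincare.poly.quotient}. Where you genuinely differ is in how the identification $j=\ell(g^J)$ is obtained. The paper (Proposition~\ref{prop:lemmas}) works directly with the gallery cut out by the ray $E_y$: it shows the last chamber of that gallery lies in the coset of the longest word, deduces by an induction along the gallery that every chamber on a ray-gallery is a distinguished coset representative, and separately shows that every coset meets some ray-gallery; the index is then literally the position along a minimal gallery. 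You instead count, along a minimal gallery to an arbitrary $C_g$, the crossings of ``live'' walls (those whose roots are not orthogonal to $\mathbf j$) and equate that count with $\ell(g^J)$ via $\ell(g)=\ell(g^J)+\ell(g_J)$. This is a legitimate alternative, and in one respect cleaner: it assigns an index to \emph{every} chamber at once, which the paper only recovers implicitly through the normalisation $n_j=N_j/n!$ and the constancy of the index on cosets.

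However, the two obstacles you flag are precisely the content of the paper's proof and are not optional. First, you need that the roots orthogonal to $\mathbf j$ are \emph{exactly} the roots of the parabolic copy of $S_n$: Lemma~\ref{lem:roots.Sn.orth.j} gives only one inclusion, and the converse is where the parabolic (indeed maximal parabolic) hypothesis enters. Second, the monotonicity worry is real if you insist on following a gallery, but it can be dissolved: the interval containing $\theta$ is determined by the signs of $a^TZ$ as $a$ runs over the live positive roots, so $j$ is the number of live roots in the inversion set of $g$, which equals $\ell(g^J)$ by the standard parabolic splitting of inversion sets --- no gallery, and hence no oscillation, is needed. Making one of these two repairs explicit is necessary for a complete proof. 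Finally, your explanation of the $F_4$ exclusion is off target: $B_n$ also has roots of two distinct lengths and causes no difficulty. The relevant obstruction, as the paper notes, is simply that $S_4$ is not a parabolic subgroup of $F_4$ (its Coxeter diagram contains no $A_3$ subdiagram), so neither Lemma~\ref{lem:coxeter.facts}\eqref{fact:min.coset.elts.add} nor \eqref{fact:poincare.poly.quotient} is available.
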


The exclusion of $F_4$ in the theorem statement is necessary because the result depends on the symmetric group being a maximal parabolic subgroup of $G$, which holds in all cases except $F_4$ (see for instance~\cite[Appendix A]{geck2000characters}).  We leave the calculation of the generating function for $F_4$ as an exercise along the lines of the examples in Sections~\ref{sec:typeB} and~\ref{sec:typeD}.

Before we prove this theorem (in Section~\ref{sec:proof}), we demonstrate with two examples that its results agree with those calculated in Sections~\ref{sec:typeB} and~\ref{sec:typeD}.

For $B_n$ the formula in Theorem~\ref{thm:main} gives
\begin{eqnarray*}
\frac{G_W(q)}{G_n(q)} \ = \  \frac{\prod_{i=1}^n (1-q^{2i})}{\prod_{i=1}^n(1-q^j)} 
                      \ = \  \prod_{i=1}^n (1+q^{_j}),
\end{eqnarray*}
as expected. Note that we have two ways of counting the number of intervals: the number of live roots, following Lemma \ref{lem:cones.ray}, and the degree of $G(s)$:
$$n + {n \choose 2} = \sum_{j=1}^n j.$$

For $D_n$ the formula is
\begin{eqnarray*}
\frac{G_W(q)}{G_n(q)} \ = \  \frac{\prod_{j=1}^{n-1}(1-q^{2j})(1-q^n)}{\prod_{i=1}^n (1-q^j)} 
                      \ = \ \prod_{j=1}^{n-1} (1+q^{_j}),
\end{eqnarray*}
again, as expected.

Before we proceed to the proof of Theorem~\ref{thm:main}, we need to introduce some more of the tools of the theory of buildings.

\section{Rays, chambers and Cayley graphs}\label{sec:chambers}

Returning to our construction from Section~\ref{sec:cones}, in vector notation we have
\begin{equation}\label{ray}
Z(y,\theta) = y - \theta {\bf j},
\end{equation}
where ${\bf j} = (1,1, \ldots, 1)^T$. For fixed $y$ we shall refer to the one dimensional affine subspace defined by (\ref{ray}), as $\theta$ varies, as the {\em ray} from $y$, denoted $E_y$:
$$E_y=\{z: z = y - \theta {\bf j},\; \theta\in\R\}.$$   
We have:
\begin{lem}\label{lem:cones.ray}
Let $\{C_i\}$ be the collection of all cones generated in the standard way by a finite reflection group, and let $y$ be a non-zero vector.
\begin{enumerate}
\item If $y$ is in general position (not lying in any defining hyperplane $H_i$) then the ray $E_y$ intersects the faces of a fixed number $N$ of the cones $C_i$ at values $\theta_1(y) < \theta_2(y) < \ldots < \theta_{N-1}(y)$.
\item Any $\theta_j(y)$ is given by
$$\theta_j(y) = \frac{a^Ty}{a^Ta},$$
for some positive root $a$ which is not orthogonal to ${\bf j}$.
\item $N-1$ is the number of roots not orthogonal to $\bf j$.
\end{enumerate}
\end{lem}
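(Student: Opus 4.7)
The plan is to prove all three claims simultaneously by analyzing how the ray $E_y$ meets the full arrangement of reflection hyperplanes $\{H_a\}$. For a root $a$, substituting $z = y - \theta \j$ into the defining equation $a^T z = 0$ gives the linear equation $(a^T \j)\,\theta = a^T y$. If $a^T \j = 0$, the ray is parallel to $H_a$ and, since general position forces $a^T y \neq 0$, disjoint from it; if $a^T \j \neq 0$, the ray meets $H_a$ transversally at the single value $\theta = (a^T y)/(a^T \j)$. Under the standard coordinate representation of the root systems treated here, one verifies directly that $a^T a = a^T \j$ for every positive root $a$ with $a^T \j > 0$, which recovers the formula in claim (2).

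Since pairs $\{a, -a\}$ index the same hyperplane, positive roots are in bijection with the reflection hyperplanes. Summing over these, the total number of crossings of $E_y$ with the arrangement is
\[
N - 1 \;=\; \#\{\text{positive roots } a : a^T \j \neq 0\},
\]
which is claim (3); notice that this count is an invariant of the root system and does not depend on $y$.

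To count the cones visited by $E_y$, I would first rule out the pathological case: $E_y$ cannot lie entirely inside any $H_a$, since that would require both $a^T \j = 0$ and $a^T y = 0$, and the second is excluded by general position. Thus $E_y$ meets the arrangement transversally in finitely many points. Strengthening ``general position'' slightly to exclude the finitely many auxiliary hyperplanes on which two crossings coincide --- namely $(a^T \j)\, b^T y = (b^T \j)\, a^T y$ for distinct positive roots $a,b$ both non-orthogonal to $\j$ --- guarantees that the $N-1$ crossing values are distinct, so they may be ordered $\theta_1(y) < \cdots < \theta_{N-1}(y)$. Each open subinterval of $E_y$ between consecutive crossings is connected and disjoint from every reflection hyperplane, so it lies in a single open Weyl chamber. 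Hence the ray visits exactly $N$ cones, proving (1).

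The main obstacle, such as it is, lies in the implicit strengthening of ``general position'' needed in part (1): distinctness of the $\theta_j(y)$ is not automatic from $y$ avoiding every $H_a$ alone, and must be enforced by also avoiding the additional ``coincident-crossing'' hyperplanes above. Once this is absorbed into the standing genericity assumption on $y$ (a measure-zero enlargement), the remainder of the lemma reduces to elementary linear geometry plus the definition of a Weyl chamber.
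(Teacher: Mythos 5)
Your overall strategy is the same as the paper's: substitute $z=y-\theta\mathbf j$ into each hyperplane equation $a^Tz=0$ and read off when and where the ray crosses. Two of your refinements genuinely improve on the paper's very terse argument. First, you are right that distinctness of the $\theta_j(y)$ does not follow from $y$ avoiding the reflection hyperplanes alone: in $B_3$ the point $y=(2,1,3)$ lies on none of the hyperplanes $z_i=0$, $z_i=\pm z_j$, yet the crossing values $y_1$ and $\tfrac12(y_2+y_3)$ coincide. The enlarged (still measure-zero) genericity condition you impose is exactly what is needed, and the paper's proof silently assumes it. Second, ruling out the degenerate case of the ray lying inside a hyperplane, and observing that each open segment between consecutive crossings lies in a single open chamber, is the honest route to part (1); the paper asserts the corresponding facts as ``elementary geometry.''

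The one step that fails is the claim that $a^Ta=a^T\mathbf j$ for every positive root $a$ with $a^T\mathbf j>0$ ``in the standard coordinate representation.'' This holds for the roots of $B_n$ and $D_n$ as listed in Sections~\ref{sec:typeB} and~\ref{sec:typeD}, but it is not a general fact: for $E_8$ the root $a=\tfrac12(1,1,\dots,1)$ (all $\lambda_i=+1$ in the half-integer family) has $a^Ta=2$ while $a^T\mathbf j=4$. Indeed the identity cannot be normalization-independent, since $a^Ty/(a^T\mathbf j)$ is invariant under rescaling $a$ whereas $a^Ty/(a^Ta)$ is not. The correct, invariant statement --- and the only thing the rest of your argument or the paper actually uses --- is that the crossing value is $\theta=a^Ty/(a^T\mathbf j)$; the formula printed in part (2) should be read with that denominator, or equivalently with each live root rescaled so that $a^Ta=a^T\mathbf j$. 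The paper's own proof derives the crossing only from $a_j^T(y-\theta\mathbf j)=0$ and never attempts your reconciliation, so you should flag the discrepancy in the stated formula rather than bridge it with an identity that is false for some of the groups the theorem covers.
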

\begin{proof}
By elementary geometry, when $y$ is in general position the ray $E_y$ intersects every defining hyperplane $H_i$ exactly once except when  {\bf j} lies in an $H_i$, in which case it does not intersect that $H_i$. Because for any hyperplane $H_i$, its root $a_i$, by definition defines  the orthogonal subspace to $H_i$, the latter condition is equivalent to being orthogonal to $a_i$. Each cone has two intersection points except for the end cones when the intersection are at $\theta_1(y)$ and $\theta_{N-1}(y)$.  Part (2) follows since the intersection points satisfy: $a_j^T(y- \theta {\bf j}) = 0$.
\end{proof}
We refer to hyperplanes $H_i$ as being {\em live} if their roots are not orthogonal to ${\bf j}$ .
The intervals we require are
\begin{align*}
U_0 &=(-\infty, \theta_1(y)],\quad U_1 =[\theta_1(y), \theta_2(y)],\quad \ldots, \\
U_{N-2} & =[\theta_{N-2}(y), \theta_{N-1}(y)],\quad  U_{N-1}   =[\theta_{N-1}(y), \infty).
\end{align*}

To prove Theorem~\ref{thm:main}, we need to introduce some of the group-theoretic geometry behind it.
An excellent reference for further reading on this topic is~\cite[Chapter 1]{abramenko2008buildings}.

The {\em chamber graph} of a finite reflection group has cones (called chambers in this context) as vertices, with two cones having an edge if they share a common face. A path in the chamber graph is called a {\em gallery}: imagine a walk through chambers
with doors in the common wall (facet). With each edge given length unity, distance between chambers is defined (as for a Cayley graph) by the shortest distance between the chambers, and we call the corresponding gallery {\em minimal}.  A gallery is minimal if it does not cross any wall more than once (\cite{abramenko2008buildings}  Proposition 1.56). Since a straight line in general position (in an obvious sense) cannot cut any wall of a chamber more than once, it defines a minimal gallery. For both the Cayley graph and the chamber graph, $C_e$ is the cone corresponding to the identity element $e$ of the group, and we call this the fundamental cone.

Following Lemma~\ref{lem:cones.ray}, the ray $E_y$ defines a gallery that we denote $G_y$.  This gallery starts in the identity chamber $C_e$ and has length $N$ (there are $N$ chambers along it).  The index $j$ of a chamber $C_g$ yielding the interval
$$U_j = [\theta_j(y), \theta_{j+1}(y)]$$
is the distance in the gallery $G_y$ from $C_e$ to $C_g$.
Consequently, the number of cones $|u^{-1}(j)|
$ that map into a given index $j$ is the number of group elements of distance $j$ from $C_e$ along the gallery $G_y$.

Now consider reflections in the walls of a chamber. Suppose this chamber is a translation by $w$ of the fundamental chamber $C_e$, so that its faces are translations of the fundamental hyperplanes that are the faces of $C_e$.  If $H_s$ is a face of $C_e$ (for a generator $s$ of $G$), then it is translated by $w$ to $wH_s$, and reflection in this hyperplane corresponds to action by the reflection $wsw^{-1}$ (in general this is not a fundamental reflection).  Thus, reflection in the face $wH_s$ of $wC_e$ gives the chamber given by the left multiplication of $w$ by $wsw^{-1}$, namely $wsw^{-1}w=ws$, or $wsC_e$.   In other words we move from the chamber $wC_e$ to the chamber $wsC_e$.  Thus, movement along a gallery corresponds to \emph{right} multiplication by a generator.

As an aside, it is worth noting that the movement along the gallery by right multiplication provides a correspondence between the chamber graph and the Cayley graph, in which the movement along edges is given by left multiplication $w\to sw$.  The chamber graph, however, is the natural place for our results because it has a very direct link with the geometry.

\section{Proof of Theorem~\ref{thm:main}}\label{sec:proof}

Theorem~\ref{thm:main} states, in effect, that the distribution of distances of group elements along galleries defined by the rays $E_y$ is the same as the distribution of lengths of minimal coset representatives when the quotient of $G$ is taken by the symmetric group $S_n$.  This is because the numerator of this generating function is the Poincar\'e polynomial of the group, and the denominator is that of the symmetric group (see~\cite[Section 1.11]{Hum90} for more details).  For background reading on the theory of reflection groups and buildings there are many good sources, but we recommend in particular Abramenko and Brown~\cite{abramenko2008buildings}, Humphreys~\cite{Hum90}, and Kane~\cite{kane2001reflection}.

To prove Theorem~\ref{thm:main}, it suffices to show that the set of group elements along the galleries defined by the rays $E_y$ is precisely the set of minimal length coset representatives of $S_n$ in $G$, for $G$ of the types given in the theorem.  We prove this in Proposition~\ref{prop:lemmas}, below, but first a short lemma.

\begin{lem}\label{lem:roots.Sn.orth.j}
The roots from $S_n$ are all orthogonal to $\mathbf j=(1,1,\dots,1)^T$.
\end{lem}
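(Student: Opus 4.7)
The plan is to identify the root system for $S_n$ explicitly and then check orthogonality directly. Embedded in $G$ acting on $\R^n$, the symmetric group $S_n$ is the Coxeter group of type $A_{n-1}$ generated by the transpositions $(i\ j)$, which act as reflections in the hyperplanes $\{z \in \R^n : z_i = z_j\}$. The roots of $S_n$ are (up to sign and scaling) the vectors normal to these hyperplanes, namely $\pm(e_i - e_j)$ for $1 \le i < j \le n$, where $e_i$ is the $i$th standard basis vector. With this identification in hand, the required inner product is immediate: $(e_i - e_j)^T \mathbf{j} = 1 - 1 = 0$, which proves the lemma.

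Equivalently, and perhaps more illuminatingly, I would present the argument geometrically: $S_n$ acts on $\R^n$ by permuting coordinates, so $\mathbf{j}$, whose entries are all equal, is fixed by every element of $S_n$. In particular, $\mathbf{j}$ lies in the reflecting hyperplane of every reflection in $S_n$. Since by definition a root is normal to its reflecting hyperplane, every root of $S_n$ is orthogonal to $\mathbf{j}$. This is essentially the statement that the standard action of $S_n$ on $\R^n$ decomposes as $\R \mathbf{j} \oplus \mathbf{j}^\perp$, with the $A_{n-1}$ root system living in $\mathbf{j}^\perp$. There is no substantive obstacle here — the proof is a one-line inner product computation (or equivalently a one-line invariance observation), and I would keep the write-up correspondingly short.
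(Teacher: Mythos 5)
Your proposal is correct, and your second (geometric) argument is essentially the paper's own proof verbatim: every element of $S_n$ fixes $\mathbf j$, so $\mathbf j$ lies in each reflecting hyperplane and is therefore orthogonal to each root. The explicit computation with the roots $\pm(e_i-e_j)$ is an equally valid, slightly more concrete version of the same observation.
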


\begin{proof}
Action by any element of $S_n$ fixes $j$; that is, reflection in any root from $S_n$ fixes $\mathbf j$, which means that the root must be orthogonal to $\mathbf j$.
\end{proof}

There are some well-known facts about Coxeter groups that we refer to in what follows, gathered in the  Lemma below:

\begin{lem}\label{lem:coxeter.facts}
Let $W$ be a finite Coxeter group, and $W'$ a parabolic subgroup of $W$.
\begin{enumerate}
	\item The minimal length elements of the cosets of $W'$ in $W$ are unique.\label{fact:min.elts.unique}
	\item The minimal length elements of the cosets of $W'$ in $W$ add in length when multiplied by any element of $W'$. \label{fact:min.coset.elts.add}
	\item If $W'$ is a parabolic subgroup of $W$, then the length distribution of distinguished coset representatives of $W'$ in $W$ is given by $G_W(t)/G_{W'}(t)$, where $G_W(t)$ and $G_{W'}(t)$ are the respective Poincar\'e polynomials. \label{fact:poincare.poly.quotient}
	\item The length of the longest word $w_0$ in $S_n$ is the number of positive roots in the root system of $S_n$.  \label{fact:longest.word.positive.roots}
\end{enumerate}
\end{lem}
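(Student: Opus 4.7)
The plan is to treat each of the four items as an application of well-developed machinery for Coxeter groups; none of them requires a genuinely new argument, but each rests on a different core fact that I would cite and sketch. I would invoke Humphreys \cite{Hum90} as the primary source throughout, and (as is standard in the Coxeter-group literature, and certainly sufficient for the intended application to $W'\cong S_n$ sitting inside $W$) I may assume that $W'=W_I$ is a standard parabolic subgroup, generated by some subset $I$ of the simple reflections of $W$.

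For items (1) and (2), the route is through the theory of distinguished minimal-length left coset representatives. Let $W^I$ denote the set of $w\in W$ satisfying $l(ws)>l(w)$ for every $s\in I$. The key technical fact, proved via the exchange/deletion condition (\cite[Prop.~1.10]{Hum90}), is that every $g\in W$ factors uniquely as $g=wu$ with $w\in W^I$ and $u\in W_I$, and moreover $l(g)=l(w)+l(u)$. Part (1) follows from the uniqueness of $w$ in this factorization, since any coset element $g=wu\in wW_I$ satisfies $l(g)\ge l(w)$ with equality iff $u=e$; and part (2) is precisely the length-additive statement $l(wu)=l(w)+l(u)$ in the same factorization.

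Part (3) is then an immediate generating-function consequence: summing $q^{l(g)}$ over $g\in W$ and using the length-additive bijection $W\leftrightarrow W^I\times W_I$ gives
\[
G_W(q)\;=\;G_{W^I}(q)\cdot G_{W'}(q),
\]
whence $G_{W^I}(q)=G_W(q)/G_{W'}(q)$, which is the asserted formula. For (4), I would invoke the standard identity $l(w)=|\Phi^+\cap w^{-1}\Phi^-|$ (\cite[Sec.~1.6--1.7]{Hum90}), so that $l(w)$ counts the positive roots sent to negative roots by $w$. Every finite Coxeter group has a unique longest element $w_0$, characterised by $w_0(\Phi^+)=\Phi^-$, and therefore $l(w_0)=|\Phi^+|$. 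Specialising to $W=S_n$, whose positive root system is $\{e_i-e_j:1\le i<j\le n\}$, completes the proof.

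Since all four items are textbook results for finite Coxeter groups, there is no serious mathematical obstacle here; the only point deserving care is to check that the standard-parabolic formulation of (1)--(3) is adequate for the intended use, namely the embedding of $S_n$ into each of the Coxeter groups $G$ listed in Theorem~\ref{thm:main}, which is exactly why $F_4$ has to be excluded (its copy of $S_n$ is not a \emph{maximal} parabolic).
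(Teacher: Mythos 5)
Your proposal is correct and takes essentially the same route as the paper, which simply cites the relevant sections of Humphreys \cite{Hum90} for all four facts (Sections~1.10, 1.11 and 1.8 respectively); you additionally sketch the standard arguments (the length-additive factorization $g=wu$ with $w\in W^I$, $u\in W_I$, the resulting product formula for Poincar\'e polynomials, and $l(w_0)=|\Phi^+|$), all of which are accurate. Your closing remark about $F_4$ is consistent with the paper's own justification for excluding that case.
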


\begin{proof}
These {statements} are all given in various texts, but in particular all are in~\cite{Hum90}: for \eqref{fact:min.elts.unique} and~\eqref{fact:min.coset.elts.add} see~\cite[Section~1.10]{Hum90}; for~\eqref{fact:poincare.poly.quotient} see~\cite[Section 1.11]{Hum90}; and for~\eqref{fact:longest.word.positive.roots} see~\cite[Section 1.8]{Hum90}.
\end{proof}

\begin{prop}\label{prop:lemmas}\label{lem:gal.coset.prec}\label{lem:cosets.meet.galleries}\label{lem:last.chamber}\label{lem:gallery.min.coset.reps}
For chambers defined by the action of a finite Coxeter group on $\R^n$, let the gallery $G_y$ be the series of adjacent chambers beginning with $C_e$, defined by the ray $E_y$ where $y$ is some point in $C_e$.
\begin{enumerate}\renewcommand{\theenumi}{\roman{enumi}}
	\item The group elements labelling chambers in $G_y$ are all minimal length $S_n$-coset representatives.  \label{prop:part2}
	\item Every $S_n$-coset has its minimal length element appearing on a gallery $G_y$ for some $y$. \label{prop:part1}
\end{enumerate}
\end{prop}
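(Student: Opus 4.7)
The plan is to combine Lemma~\ref{lem:roots.Sn.orth.j}---that every $S_n$-root is orthogonal to $\mathbf j$---with the standard identification of $W$-chambers inside the fundamental $S_n$-region $C_e^{S_n}$ as exactly the chambers $wC_e$ for $w$ a minimum length $S_n$-coset representative. Part~(\ref{prop:part2}) then becomes the statement that $G_y \subseteq C_e^{S_n}$, and part~(\ref{prop:part1}) becomes the statement that every such chamber is visited by some $G_y$.

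For part~(\ref{prop:part2}), the ray $E_y = y - \theta\mathbf j$ has direction $-\mathbf j$, which by Lemma~\ref{lem:roots.Sn.orth.j} is parallel to every $S_n$-hyperplane. Since $G_y$ starts in $C_e \subset C_e^{S_n}$ and the ray never crosses an $S_n$-wall, every chamber of $G_y$ remains in $C_e^{S_n}$. To identify such chambers with minimum length coset representatives I would use the standard equivalence $\ell(sw) > \ell(w) \iff C_e$ and $wC_e$ lie on the same side of the wall $H_s$: the chamber $wC_e$ lies in $C_e^{S_n}$ precisely when it is on the same side as $C_e$ of every simple $S_n$-wall $H_s$, which is exactly the defining condition (see Lemma~\ref{lem:coxeter.facts}\eqref{fact:min.elts.unique}--\eqref{fact:min.coset.elts.add}) for $w$ to be the distinguished representative of $S_n w$.

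For part~(\ref{prop:part1}), I construct, for each chamber $wC_e \subset C_e^{S_n}$, a point $y\in C_e$ so that $E_y$ visits $wC_e$. The cone $C_e^{S_n}$ is invariant under translation by $\R\mathbf j$ because each of its bounding hyperplanes has normal orthogonal to $\mathbf j$; the fundamental $G$-chamber $C_e$ is cut from $C_e^{S_n}$ by one further inequality $\alpha_0(z) \ge 0$, where $\alpha_0$ is the simple root of the unique generator of $G$ not in $S_n$. For the types in Theorem~\ref{thm:main}, $S_n$ is a maximal parabolic of $G$ and $\alpha_0\cdot\mathbf j>0$ (the failure of this positivity is essentially what excludes $F_4$). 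Taking any interior point $p\in wC_e$ and letting $y := p + t\mathbf j$ for $t$ sufficiently large, we have $\alpha_0(y) = \alpha_0(p) + t\,\alpha_0(\mathbf j) > 0$ while the $S_n$-inequalities are preserved by the $\mathbf j$-shift, so $y \in C_e$; and $E_y$ meets $wC_e$ at $\theta = t$.

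The main technical point is the geometric identification of chambers of $C_e^{S_n}$ with minimum length coset representatives, which is a direct translation of length additivity into same-side-of-wall statements. A secondary point is the case-by-case verification that $\alpha_0 \cdot \mathbf j > 0$ for the allowed Coxeter types; this amounts to $\mathbf j$ lying in the closure of $C_e$, and follows from $\mathbf j$ being (up to scalar) the fundamental weight dual to the extra simple reflection whenever $S_n$ is a maximal parabolic of $G$.
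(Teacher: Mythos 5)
Your proof is correct, but it takes a genuinely different route from the paper's. For part~(i) the paper argues in two stages: it first pins down the \emph{last} chamber of $G_y$ as the distinguished representative of the coset containing the longest word $w_0$, by comparing the number of roots orthogonal to $\mathbf j$ (Lemma~\ref{lem:roots.Sn.orth.j}) with the number of walls a gallery crosses, and then runs a backward induction along the gallery using length additivity (Lemma~\ref{lem:coxeter.facts}\eqref{fact:min.coset.elts.add}) to show that the predecessor of a distinguished representative is again distinguished. You instead use the geometric characterisation of distinguished right-coset representatives as the chambers contained in the fundamental $S_n$-region, and note that the ray, being parallel to every $S_n$-hyperplane, can never leave that region: this is shorter, treats all chambers of the gallery at once, and avoids the paper's reliance on part~(ii) inside the proof of part~(i). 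For part~(ii) the paper manipulates the explicit coordinate inequalities $Z_n\ge\dots\ge Z_1\ge 0$ (essentially the type~$B$ picture) together with the permutation action on coordinates, whereas your translation $y=p+t\mathbf j$ is coordinate-free and uniform across the allowed types. Two small points you should make explicit: the equivalence ``$\ell(sw)>\ell(w)$ if and only if $wC_e$ and $C_e$ lie on the same side of $H_s$'' is not literally among the facts collected in Lemma~\ref{lem:coxeter.facts} and needs its own citation (it is standard; see~\cite{Hum90}); and the sign condition $\langle\alpha_0,\mathbf j\rangle>0$ does require the one-line verification you sketch ($\mathbf j$ is orthogonal to the simple roots of the maximal parabolic $S_n$, hence a nonzero multiple of the fundamental coweight dual to $\alpha_0$, and were that multiple negative one would simply traverse the ray in the opposite direction), so it is a normalisation issue rather than a gap.
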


\begin{proof}
First note that $S_n$ is a parabolic subgroup of every finite Coxeter group $G$ except $F_4$, and we consider $G$ acting on $\R^n$~\cite[Appendix A]{geck2000characters}.
Explicitly: $S_n$ is a parabolic subgroup of the groups of types $A_n$, $B_n$ and $D_n$ that we consider acting on $\R^n$, and of the groups of types $E_n$ for $n=6,7,8$ (acting on $\R^n$); $S_3$ and $S_4$ are parabolic subgroups of the groups $H_3$ and $H_4$ acting on $\R^3$ and $\R^4$ respectively; and $S_2$ is (rather trivially) a parabolic subgroup of each of the dihedral groups $I_2(m)$ acting on $\R^2$.

Part~\ref{prop:part2}.

We begin by showing that the element corresponding to the \emph{last} chamber in the gallery is in the same coset as the longest word in the group $G$.
The length of the longest word $w_0$ in $S_n$ is the number of positive roots in the root system of $S_n$ (Lemma~\ref{lem:coxeter.facts}\eqref{fact:longest.word.positive.roots}), which is the number of roots orthogonal to $\j$ (Lemma~\ref{lem:roots.Sn.orth.j}).  Let $\mathcal C$ be the $S_n$-coset containing $w_0$.  Because minimal coset elements add in length with any element of $S_n$ (Lemma~\ref{lem:coxeter.facts}\eqref{fact:min.coset.elts.add}), the minimal coset representative in $\mathcal C$ must have length the number of positive roots \emph{not} orthogonal to $\j$.

An element of $\mathcal C$ appears in some gallery $G_y$, for some $y$, by Lemma~\ref{lem:cosets.meet.galleries}.  The number of chambers in each gallery is the number of roots not orthogonal to $\j$.  Since crossing each hyperplane from one chamber to the next along the gallery adds at most 1 in length, the longest element on any gallery is at most length the number of roots not orthogonal to $\j$.  Therefore the longest word of the group must be in a coset whose minimal length element is precisely the number of roots not orthogonal to $\j$.  This can only be the last chamber in the gallery.

We now show that all other group elements on the gallery are minimal right coset representatives.

Take a minimal right coset representative $s_{i_1}\dots s_{i_m}$ on the gallery $G_y$.  We claim that the preceding element on the gallery, $s_{i_2}\dots s_{i_m}$, is also a minimal length coset representative.
If not, then there is a $w\in S_n$ satisfying $\ell(s_{i_2}\dots s_{i_m}w)<\ell(w)+(m-1)$ (length is additive for minimal coset representatives in Coxeter groups; see Lemma~\ref{lem:coxeter.facts}\eqref{fact:min.coset.elts.add}).  But then $\ell(s_{i_1}s_{i_2}\dots s_{i_m}w)<\ell(w)+(m-1)+1$ since multiplying by a generator can add at most 1 to the minimal length.  That is, $\ell(s_{i_1}s_{i_2}\dots s_{i_m}w)<\ell(w)+m$, contradicting the minimality of $s_{i_1}\dots s_{i_m}$.
It follows that all elements corresponding to cones along a gallery $G_y$ are minimal coset representatives.

Part~\ref{prop:part1}.

It suffices to show that each $S_n$ coset contains an element in the gallery $G_y$.
Consider a point $Z=(Z_1,\dots,Z_n)$ in the fundamental cone that is also on the ray $E_y=\{y-\theta \mathbf j\mid \theta\in\R\}$.  The inequality its coordinates must satisfy is
\begin{equation}\label{eq:Z.ineq}
Z_n\ge\dots\ge Z_1\ge 0
\end{equation}
for $Z=(Z_1,\dots,Z_n)$.
Because the entries in $\j$ are all equal, this means $y_n\ge\dots\ge y_1\ge 0$ for $y=(y_1,\dots,y_n)$.

Moving along the ray $E_y$ in the positive direction (decreasing $\theta$) does not change the inequalities in Equation~\eqref{eq:Z.ineq} (all components stay positive) and hence the ray stays in the fundamental cone.  Increasing $\theta$ moves the ray through the gallery into different cones as first $Z_n\ge\dots\ge Z_2\ge 0\ge Z_1$, then $Z_n\ge\dots\ge Z_3\ge 0\ge Z_2\ge Z_1$ and so on.

On the other hand, acting by $S_n$ on a point on the ray permutes the entries, but this fixes the entries of
$\j$ and simply permutes the entries of $y$.  So a chamber on the gallery corresponds to an ordering of form $Z_n\ge\dots\ge Z_{i+1}\ge 0\ge Z_i\ge\dots\ge Z_1$, and the other chambers in its $S_n$ coset are obtained by permuting these entries.

Now consider an arbitrary point $v$ in $\R^n$ and denote the cone it is contained within by $C_{w(v)}$.
The action of $S_n$ permutes the entries of $v$, and there is a permutation that puts the entries in increasing order.  Every point that is in increasing order is in a cone that is on a ray-gallery, so we are done.
\end{proof}

We are now in a position to prove our main result.  Recall that this gives a generating function for the frequency distribution of intervals in the confidence net based on an irreducible finite Coxeter group.

\begin{proof}[Proof of Theorem~\ref{thm:main}]
The set of all rays $E_y$ from the identity chamber to the last chamber (labelled by $g$) gives the set of all possible galleries from 1 to $g$.  Each group element in each gallery is a minimum length coset representative of $S_n$ in $G$, and all $S_n$-cosets have their minimal length representative occuring in such a gallery (Proposition~\ref{prop:lemmas}).

Recall that $u^{-1}(j)$ is the set of chambers that are in the $j$'th position along a ray $E_y$.  When we start with the identity chamber, this is simply the set of group elements of length $j$ that appear in galleries.  From Proposition~\ref{prop:lemmas}, this is the set of minimal coset representatives of length $j$.  So the number $n_j$ is the number of minimal coset representatives of length $j$, and this is given by our formula, by~Lemma~\ref{lem:coxeter.facts}\eqref{fact:poincare.poly.quotient}.
\end{proof}

\section{Further examples}

In Sections~\ref{sec:typeB} and~\ref{sec:typeD} we gave examples of confidence nets from our theory for types $B$ and $D$ respectively, and recalculated them using Theorem~\ref{thm:main} after its statement.  Here we add types $E_6$, $E_7$, $E_8$ and $A_n$.  The remaining types of finite Coxeter group (omitted) are types $H_3$ and $H_4$, the dihedral groups $I_2(m)$, and the group $F_4$ (to which Theorem~\ref{thm:main} doesn't apply).

\subsection{Type $E$}
\label{sub:type.E}

Inserting the $d_j$ values for $E_6$, $E_7$ and $E_8$ from Table~\ref{tab:degrees} and obtaining help in factorization from Maple we have the following formulae:

\begin{eqnarray*}
E_6: & & (q+1)(q^2+1)(q^2-q+1)(q^4-q^2+1)(q^2+q+1)(q^6+q^3+1)(q^4+1) \\
& & \\
E_7: & &  (q+1)^4(q^2-q+1)^2(q^6+q^3+1)(q^6-q^3+1)\\
& & (q^6-q^5+q^4-q^3+q^2-q+1) \\
& & (q^2+1)(q^2+q+1)(q^4-q^2+1)(q^4-q^3+q^2-q+1)(q^4+1)\\
& & \\
E_8: & &  (q^4+q^3+q^2+q+1)(q^6+q^3+1)(q^6-q^3+1)(q^4+1) \\
& & (q^6-q^5+q^4-q^3+q^2-q+1)(q^8-q^7+q^5-q^4+q^3-q+1) \\
& & (q^8+q^7-q^5-q^4-q^3+q+1)(q^8-q^6+q^4-q^2+1)(q^8-q^4+1)\\
& & (q^2+q+1)^2(q^4-q^3+q^2-q+1)^2(q^2+1)^2 \\
& & (q^4-q^2+1)^2(q^2-q+1)^3(q+1)^4.
\end{eqnarray*}

Let us consider $E_8$ in a little more detail. It has order $2^{14} 3^5 5^2 7 = 696729600$, meaning that $\R^8$ is split into this many cones. The root system is described in the standard way as:
$$
\{\pm e_i \pm e_j : 1 \leq i < j\}, \;\;
\left\{ \frac{1}{2}\sum_{i=1}^8 \lambda_i e_i : \; \lambda_i = \pm 1, \prod_{i=1}^8 \lambda_i = 1 \right
\}.
$$ 
Again we have two ways of counting. The number of live roots are those not orthogonal to ${\bf j} = (1,1,1,1,1,1,1,1)^T$.
From the first set above we have those of the form $(1,1,0, \ldots)$, namely ${8 \choose 2}$. From the second set
we have all those for which the number of ones and zeros is different and even, being careful not to double count. This gives
$${8 \choose  2} + 1 + {8 \choose 2} + {8 \choose 6} = 92. $$
On the other hand $G(s) = a_0 +a_1q+ \ldots$ is a polynomial of degree 92 whose $N=93$ coefficients are laid out below to show the symmetry.
{\tiny
$$
  \begin{array}{rrrrrrrrrrr}
    1 & 1 & 1 & 2 & 3 & 6 & 6 & 8 & 10 & 13 & 17  \\
    21 & 26 & 32 & 38 & 46 & 55 & 64 & 74 & 86 & 98 & 112 \\
    127 & 142 & 157 & 175 & 193 & 211 & 230 & 249 & 267 & 287 & 307 \\
    325 &343 & 361 & 377 & 393 & 409 & 421 & 432 & 443 & 452 & 458 \\
    464 & 466 & 466 & 466 & 464 & & & & & & \\
    458 & 452 & 443 & 432 & 421 & 409 & 393 & 377 & 361 & 343 & 325 \\
    307 & 287 & 267 & 249 & 230 & 211 & 193 & 175 & 157 & 142 & 127 \\
    112 & 98 & 86 & 74 & 64 & 55 & 46 & 38 & 32 & 26 & 21  \\
    17 & 13 & 10 & 8 & 6 & 4 & 3 & 2 & 1 & 1 & 1 \\
  \end{array}
  $$
 }

\subsection{Type $A_n$} %
The usual interpretation of the action of type $A_n$ is as the restriction of the symmetric group $S_{n+1}$ to the hyperplane: $H:\sum_{i=1}^{n=1} x_i=0$. When $n=2$ this yields a figure in $2$-dimensions with cones with apex angle $\frac{1}{3}\pi$.  The role of $S_n$ in the above examples is now played by
$A_{n-1}$. Referring to the first entry in Table 1 this gives the generating function
\begin{eqnarray*}
G(q) & = & \frac{\prod_{i=1}^n (1-q^{i})}{ \prod_{i=1}^n (1-q^i)} \\
     & = & \sum_{i=0}^n q^n,
\end{eqnarray*}
giving a discrete uniform distribution on the net chambers.

The statistical interpretation takes a little care. There are different choices one can make for the
representation of $A_{n-1}$ as a subgroup of $A_n$. A simple choice is for $A_{n-1}$ to be the restriction to the hyperplane $H$
of the group that permutes the first $n$ coordinates. Let us require that the ``data" $Y$ also lies in $H$ and that the model
is given by
$$Y = \theta {\bf k} + Z$$
where $\mbox{prob} \{Z \in C_i\} = \frac{1}{(n+1)!}$ and the $C_i$ are the cones of $A_n$ in $H$ (with similar assumptions as in the introduction). The key is to
make the vector ${\bf k}$, which is the analogue of the previous ${\bf j}$, to be invariant under $A_{n-1}$. Thus, we can take
$${\bf k} = (1,1, \ldots, 1, -n)^T.$$
Following Lemma~\ref{lem:cones.ray}, we find the boundary of the net chamber by taking the intersection of the ray $Y-\theta {\bf k}$ with the live root of $A_n$ that is all those
not orthogonal to $\bf k$. These are
$$(1,0 \ldots, 0 -1)^T,(1,0 \ldots, 0 -1)^T , \ldots, (0,0 \ldots, 0, 1, -1)^T.$$
Taking the $j$-th member of this list first we see that the boundary
is given by
$$y_j-\theta - (y_{n+1} + n \theta) = y_j-y_{n+1} - (n+1) \theta.$$
But since $Y \in H$ we have $y_{n+1} = -\sum_{i=1}^n y_i$. This means that the boundaries
are the $n$ sample quantities
$$\frac{1}{n+1}\left(2 y_j + \sum_{i \neq j}^n x_i\right), \; j=1, \ldots, n,$$
giving $n+1$ chambers, as expected.

\section{A non-group cone example}
Exact coverage nets also arise for the situation in which $\R^n$ is divided into cones that are congruent, but not arising as the
fundamental cones of a reflection group. Consider the partition of the positive orthant into
$n$ cones generate by a ``long diagonal" and $n-1$ principal axes. Leaving out the first principal axis
we obtain generators:
$$(1,1,\ldots,1)^T, (0,1,0, \ldots, 0)^T, (0,0,1,\ldots, 0)^T, \ldots ,(0,0, \ldots ,1)^T.$$
The other cones are generated by successively omitting principal axes.
Now take all sign changes to reach all other quadrants. This divides $\R^n$ into $n 2^n$ congruent cones.

Assume the $Z$-probability content of each cone is equal and apply the method used for the other examples.
We first check how many, and which, walls are cut by a typical ray, and group together the cones which lead to the same ``index", as above. The number of planes is $2n+1$. After a little work
it turns out that the intervals formed by the order statistics $y_{(1)}< y_{(2)} < \ldots < y_{(n)}$
and all neighbour pairs $\frac{y_{(i)}+ y_{(j)}}{2}$ form a net of $2n$ intervals.

The successive net vectors (ignoring commas) are the rows below for $n=2, \ldots, 6$.
\vspace{3mm}
{\tiny
$$
\begin{array}{cccccccccccc}
       &  &   &  & 1 & 1 & 1 & 1 &  &   &  &  \\
       &  &   & 1 & 1 & 2 & 2 & 1 & 1 &   &  &  \\
       &  & 1 & 2 & 3 & 3 & 3 & 3 & 1 & 1 &  &   \\
       & 1& 1& 4 & 4 & 6 & 6 & 4 & 4 & 1 & 1 &  \\
     1 & 1& 5& 5 & 10&10 &10 & 10& 5 & 5 & 1 & 1 \\
\end{array}
$$
}

Note how each row is constructed by repeating the integer of the previous
row of the Pascal triangle
eg the row $1,6,15,20,15,6,1$ is split $6 \rightarrow (1,5),\;\;15 \rightarrow (5, 10),\;\;20 \rightarrow (10, 10)$,
giving the last row of the tableau above. The generating function is
$$(1+q)(1+q^2)^{n-1}.$$

\section{Some asymptotics}
The generating function for $B_n$ is well-known in the theory of partitions. It is the generating function for the number partition of an integer into at most $n$ distinct parts. The infinite version $G(q) = \prod (1+q^i)_{i=1}^{\infty}$ gives the number of partitions into $j$ distinct parts, with no other restrictions, and the two generating functions are identical up to $q^n$. The general  $G(q)$ has a long history. Following their celebrated work on partitions~\cite{hardy1918asymptotic},  Hardy and Ramanujan also studied this case, giving an asymptotic formula, see \cite{hua1942number} \cite{hardy1988inequalities}. For an extensive review see \cite{andrews1998theory}.

Noting the convergence of the Binomial distribution to the Normal (and following computer experimentation), it is natural to
conjecture that for $G_n(s)$ the $\{a_n\}$ follow an asymptotic distribution, and indeed this is the case.  The associated  probability distribution is that of the random variable $$U = \sum_{j=1}^{n} j V_j,$$
where the $V_i$ are iid Bernoulli random variables with probability $\frac{1}{2}$. Then, a theorem of H\'ajek and Sid\'ak \cite{hajek1967theory} for sums of
independent random variables with unequal means and variance gives $U \sim N(\mu, \sigma)$
where $\mu = \frac{1}{2}\sum_{j=1}^n = \frac{1}{4}n(n+1)$ and $\sigma^2 = \frac{1}{4} \sum_{j=1}^n j^2 = \frac{1}{24}n(n+1)(2n+1)$.

An Edgeworth-type expansion shows that the standardized random variable $\frac{U-\mu}{s}$ can be approximated
by $$\phi(u)\left(1 + \frac{\kappa_4}{24}\;\mathcal H_4(u)\right),$$
where $\phi$ is the standard Normal density, $\kappa_4$ is the fourth cumulant of the standardized variable, and $\mathcal H_4 =  u^4-6u^2+3$ is the order 4 standard Hermite polynomial.

After a little  work we derive, for $B_n$,
$$\kappa_4 = -\frac{12}{5} \frac{3n^2 +3n-1}{n(n+1)(2n+1)}.$$
Keeping the $\mbox{O}\left( \frac{1}{n} \right)$ terms we have the approximation
$$\phi(u)\left(1- \frac{3}{20} \mathcal H_4(u)\frac{1}{n} + \mbox{O}\left( \frac{1}{n^2} \right) \right). $$
There are similar result for $D_n$.

For $E_8$ the distribution  mean and variance  are $(\mu,\sigma^2) = (46,\frac{6811}{3})$ and probabilities roughly follow a normal distribution with this mean and variance. The approximation using $H_4$ is surprisingly good. For the standardized distribution
$$\frac{\kappa_4}{24}= -\frac{365311}{28896080} = -0.01264 \ldots.$$
Converting the approximation back to the original cell probabilities the maximum absolute deviation and the root mean squared
 error are approximately $1.5 \times 10^{-4}$ and $7.3 \times 10^{-5}$ respectively. For the Edgworth-type approximation, the integral (or in this case the sum) of the approximate probability will typically not be unity. In this case the sum of the approximands
is $1.0001534 \ldots$ so that the error is of the same order as the maximum deviation.

\section{Conclusions and further work}
This paper is a contribution to coverage problems in which, essentially, there are only group symmetry conditions on the underlying distribution. The most obvious limitation of the present paper is that it only covers a single parameter, although much classical non-parametrics is of this type. The long term aim is to use the ideas of this paper to develop coverage nets based on chambers in more than one dimension. Roughly, the requirements are (i) a large group that houses the distributional assumptions, (ii) a smaller sub-group under which the (linear) model is invariant, (iii) a valid quotient or coset operation, (iv) the use of the Chevalley factorization formula to perform the counting, and (v) more extensive use of the theory of buildings. Another challenge is to apply the theory to infinite groups such as affine Weyl groups which already have applications in physics, material science and genomics~\cite{Bodner2012affine}, \cite{prince1994mathematical}, \cite{egri2014group}. Finally, there may be theory  with upper and lower probabilities where exactness is hard to find which would lead to some kind of group-based belief functions.

\bibliographystyle{plain}

\end{document}